\documentclass[twoside]{amsart}
\usepackage{graphicx}
\usepackage{amssymb,amsfonts,amstext,amsmath,amsthm}
\newcommand{\ben}{\begin{enumerate}}
\newcommand{\een}{\end{enumerate}}
\input{amssym.def}
\input{amssym.tex}
\baselineskip 24pt
\newtheorem{thm}{Theorem}[section]

\newtheorem{cor}[thm]{Corollary}
\begin{document}

\title{on totally antimagic total labeling of complete bipartite graphs }
\author{A. D. Akwu, D. O. A. Ajayi}
\date{}
\subjclass[2000]{05B15, 05C38, 20N05}
\keywords{Egde antimagic labeling, Vertex antimagic labeling, Bipartite graphs}
\address{A. D. Akwu \hfill\break\indent Department of Mathematics, Federal University of Agriculture, Makurdi, Nigeria}
\address{Deborah Olayide A. AJAYI \hfill\break\indent Department
of Mathematics, University of Ibadan, Ibadan, Nigeria}
\email{ abolaopeyemi@yahoo.co.uk, adelaideajayi@yahoo.com}
\begin{abstract} This paper deals with the problem of finding totally antimagic total labelings of complete bipartite graphs. We prove that complete bipartite graphs are totally antimagic total graphs. We also show that the join of complete bipartite graphs with one vertex is a totally antimagic total graph.
\end{abstract}
\maketitle
\section{Introduction}
All graphs in this paper are finite, undirected and simple. If $G$ is a graph, then $V(G)$ and $E(G)$ stand for the vertex set and edge set of $G$. A $(p,q)$ graph $G$ is a graph such that $|V(G)|=p$ and $|E(G)|=q$. Let $K_{n,n}$ denote the complete bipartite graph with $n$ vertices in each partite set and $K_{n,m}$ denote the complete bipartite graph with different number of vertices in each partite set. For graph theoretic terminology, we refer to Chartrand and Lesniak \cite{CL}.

A {\it labeling} of graph $G$ is any mapping that sends some set of graph elements to a set of non-negative integers. If the domain is the vertex set or the edge set, the labeling is called {\it vertex labeling} or {\it edge labeling} respectively. Furthermore, if the domain is $V(G)\cup E(G)$, then the labeling is called a {\it total labeling}. If the vertices are labeled with the smallest possible numbers, i.e., $f(V(G))=\{1,2,3,...,p\}$, then the total labeling is called {\it super}.

Let $f$ be a vertex labeling of a graph $G$. Then we define the {\it edge-weight} of $uv\in E(G)$ to be $w_f(uv)=f(u)+f(v)$. If $f$ is total labeling, then the {\it edge-weight} of $uv$ is $wt_f(uv)=f(u)+f(uv)+f(v)$. The associated {\it vertex-weight} of a vertex $v$, $v\in V(G)$ is defined by $wt_f(v)=f(v)+\sum_{u\in N(v)}f(uv)$, where $N(v)$ is the set of the neighbors of $v$.
A labeling $f$ is called {\it edge-antimagic total (vertex-antimagic total)}, for short EAT (VAT), if all edge-weight  (vertex-weight) are pairwise distinct. A graph that admits EAT (VAT) labeling is called {\it EAT (VAT)} graph.

The notion of an antimagic graph was introduced by Hartsfield and Ringel \cite{HR}. According to Hartsfield and Ringel, an antimagic labeling of a graph $G$ is an edge labeling where all the vertex weights are required to be pairwise distinct. For an edge labeling, a {\it vertex weight} is the sum of the labels of all edges incident with the vertex. Bodendiek and Walther \cite{BW,BW1} were the first to introduce the concept of $(a,d)$-vertex antimagic edge labeling, they called this labeling {\it $(a,d)$vertex-antimagic labeling}. Ba\v{c}a {\it  et al}. \cite{BE} introduced the concept of an $(a,d)$-VAT labeling motivated by results of Bodendiek and Walther \cite{BW,BW1}. They defined the concept of {\it $(a,d)$-VAT labeling} as if the vertex-weights form an arithmetic sequence starting from $a$ and having a common difference $d$, i.e., the set of all the vertex-weight is $\{a,a+d,...,a+(p-1)d\}$ for some integers $a>0$ and $d\geq 0$. The basic properties of an $(a,d)$-VAT labeling and its relationship to other types of magic-type and antimagic-type labeling are investigated in \cite{BE}.

In \cite{SE}, it is shown how to construct super $(a,d)$-VAT labeling for certain families of graphs including complete graphs, complete bipartite graphs, cycles, paths and generalized Pertersen graphs. An {\it $(a,d)$-EAT labeling} of a graph $G$ is a total labeling with the property that the edge-weights form an arithmetic sequence starting from $a$ and having a common difference $d$, where $a>0$ and $d\geq 0$ are two integers. Simanjuntak {\t et al} \cite{SE1} introduced the notion of $(a,d)$-EAT labeling. They gave constructions of $(a,d)$-EAT labelings for cycles and paths. For further results on graph labelings see \cite{G} and \cite{MW}.

A {\it totally antimagic total labeling} (TAT) is a labeling that is simultaneously vertex-antimagic total and edge-antimagic total. A graph that admits totally antimagic total labeling is a {\it totally antimagic total graph} (TAT graph). The notion of totally antimagic total graphs was introduced by Ba\v{c}a {\it et al}. \cite{BE1}. They proved that paths, cycles, stars, double-stars and wheels are totally antimagic total graphs.

In Section $2$, we prove that complete bipartite graphs $K_{n,n}$ and $K_{n,m}$, where $n\neq m$ are totally antimagic total graphs. In Section $3$, we show that the join of complete bipartite graphs with one vertex is a TAT graph.
\section{TAT labelings of complete bipartite graphs}
We begin with the following definitions.
\subsection{Definition \cite{BE1}}
A labeling $g$ is {\it ordered (sharp ordered)} if $wt_g(u)\leq wt_g(v)\ \ (wt_g(u)<wt_g(v))$ holds for every pair of vertices $u,v\in V(G)$ such that $g(u)<g(v)$. A graph that admits a (sharp) ordered labeling is called a {\it (sharp) ordered graph}.
\subsection{Definition}
Let $n$ be a positive integer, a labeling $g$ of a graph $G$ is a {\it weak ordered (sharp)} if the vertex set of $G$ can be  partitioned  into $n$ sets such that any of the sets admits a (sharp) ordered labeling. A graph that admits a weak (sharp) ordered labeling is called a weak (sharp) ordered graph.\\
\\
Next, we show that $K_{n,n}$ is a TAT graph.

\begin{thm} \label{TM2}
Let $n>3$ be a positive integer, then the graph $K_{n,n}$ is a weak ordered super TAT graph.
\end{thm}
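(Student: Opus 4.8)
The plan is to exhibit a single explicit super total labeling $f$ of $K_{n,n}$ and then verify, in turn, each of the four things being claimed: that $f$ is super, that it is edge-antimagic total, that it is vertex-antimagic total, and that it is weak ordered. Fix the partite sets $X=\{x_1,\dots,x_n\}$ and $Y=\{y_1,\dots,y_n\}$, so $p=2n$ and $q=n^2$, and recall that a super total labeling must use $\{1,\dots,2n\}$ on the vertices and $\{2n+1,\dots,2n+n^2\}$ on the edges. I would take $f(x_i)=i$ and $f(y_j)=n+j$, and label the edge $x_iy_j$ by a matrix-type formula such as $f(x_iy_j)=2n+(i-1)n+j$, inserting, if needed, a parity-dependent permutation of the labels inside each "row" $i$; such a twist turns out to be necessary when $n$ is odd, since the plain row-major labeling already repeats an edge-weight in that case. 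That $f$ is then a bijection onto $\{1,\dots,2n+n^2\}$ and is super is immediate from the formula.

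For the edge-antimagic property I would compute $wt_f(x_iy_j)=f(x_i)+f(x_iy_j)+f(y_j)$ in closed form; for the row-major choice it has the shape $i(n+1)+2n+2j$. To see that the $n^2$ edge-weights are pairwise distinct, suppose two of them coincide: the difference of row indices contributes a nonzero multiple of $n+1$, while the difference of column indices contributes a quantity of absolute value at most $2(n-1)<2(n+1)$, so the rows can differ by at most one, and the remaining adjacent-row case is killed by a parity argument (for $n$ even, $n+1$ is odd and cannot equal an even number). It is exactly here that odd $n$ is problematic and the permutation twist inside each row is used to destroy the coincidence. I expect this step to be the main obstacle, because the edge labeling must be chosen so that no edge-weight collision survives for any $n>3$ while everything else stays under control.

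Next comes the vertex-antimagic property. Summing the labels of the edges at a vertex gives closed forms: $wt_f(x_i)=f(x_i)+\sum_{j}f(x_iy_j)$ is affine in $i$ with positive slope, and $wt_f(y_j)=f(y_j)+\sum_{i}f(x_iy_j)$ is affine in $j$ with positive slope, so the $n$ weights inside $X$ are pairwise distinct and likewise the $n$ weights inside $Y$. It then remains to rule out an equality between an $X$-weight and a $Y$-weight; writing out the two affine expressions, such an equality reduces to a relation of the form $(n^2+1)(2i-n)=2j(n+1)$, and since $\gcd(n^2+1,n+1)$ divides $2$ one is forced to $2i-n=0$ (when $n$ is odd this is immediate from a parity contradiction, and when $n$ is even it forces $j=0$), which is impossible. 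Hence all $2n$ vertex-weights are distinct.

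Finally, for "weak ordered" I would exploit that $wt_f$ is strictly monotone in the index on each of $X$ and $Y$, so each partite set with the restricted labeling is an ordered set, and therefore so is every subset of it. It then suffices to partition $V(K_{n,n})$ into $n$ classes each contained in a single partite set — for instance take $Y$ itself as one class and split $X$ into the $n-1$ classes $\{x_1\},\dots,\{x_{n-2}\},\{x_{n-1},x_n\}$. Each class admits an ordered labeling, so $f$ is weak ordered; the hypothesis $n>3$ is what is needed to make the estimates in the edge-weight and vertex-weight verifications close up for the twisted construction. Assembling the four verifications shows that $K_{n,n}$ is a weak ordered super TAT graph for every $n>3$.
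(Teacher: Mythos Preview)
Your proposal works cleanly for even $n$, but there is a genuine gap in the odd case. With the row-major edge labeling $f(x_iy_j)=2n+(i-1)n+j$ the edge-weight is $i(n+1)+2j+2n$; for odd $n\ge 5$ this collides, e.g.\ $(i,j)=(1,(n+3)/2)$ and $(2,1)$ give the same value. You acknowledge this and promise a ``parity-dependent permutation of the labels inside each row,'' but you never specify it, and more importantly any such within-row permutation changes the column sums $\sum_i f(x_iy_j)$. Consequently your closed form for $wt_f(y_j)$, and hence the reduction $(n^2+1)(2i-n)=2j(n+1)$ on which your cross-comparison rests, would no longer be valid after the twist. So for odd $n$ you have neither an explicit EAT labeling nor a VAT verification that survives whatever twist you intend; the two requirements interact, and handling one may break the other.

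The paper sidesteps the parity split altogether by a different labeling: it interleaves the vertex labels, setting $f(u_i)=2i-1$ and $f(v_j)=2j$, and takes $f(u_iv_j)=n(2+j)-(i-1)$. The resulting edge-weight is $i+j(n+2)+2n$, which is injective for every $n$ simply because $1\le i\le n<n+2$ (a division-with-remainder argument), so no twist or case analysis is needed. The vertex-weights then come out as $wt_f(u_i)=i(2-n)+\tfrac{n}{2}(n^2+5n+2)-1$ (strictly decreasing in $i$) and $wt_f(v_j)=j(2+n^2)+\tfrac{n}{2}(1+3n)$ (strictly increasing in $j$), and the paper argues $wt_f(u_i)\neq wt_f(v_j)$ by a divisibility check. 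Your $\gcd$/parity argument for the cross-comparison is in fact tidier than what the paper does, but it is tied to your untwisted labeling and does not transfer. A minor point: since in your labeling both $wt_f(x_i)$ and $wt_f(y_j)$ are strictly increasing, the partition $\{X,Y\}$ into two blocks already witnesses weak ordered; there is no need to shatter $X$ into $n-1$ pieces.
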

\begin{proof}
We partition the vertex set of $K_{n,n}$ into two equal sets $X_n$ and $Y_n$. Denote the vertices of $X_n$ and $Y_n$ by the symbols $u_i$ and $v_j$ respectively, where $i,j=1,2,...,n$, i.e., $X_n=\{u_1,u_2,...,u_n\}$ and $Y_n=\{v_1,v_2,...,v_n\}$.\\
We define a labeling $f$ of $K_{n,n}$ in the following way:
$$f(u_i)=2i-1,\ \  \ \ i=1,2,...,n,$$
$$f(v_j)= 2j, \ \ \ \ \ j=1,2,...,n,$$
$$f(u_iv_j)=n(2+j)-(i-1),\ \ \ \ i,j=1,2,...,n.$$
It is easy to see that $f$ is a bijection from the union of the vertex set and edge set of $K_{n,n}$ to the set $\{1,2,...,n(2+n)\}$.\\
\\
For the edge-weight of the edge $u_iv_j,\ \ i,j=1,2,...,n$ under the labeling $f$, we have
$$wt_f(u_iv_j)=f(u_i)+f(u_iv_j)+f(v_j)$$
$$=2i-1+n(2+j)-(i-1)+2j$$
$$=n(2+j)+2j+i.$$
The weights of all edges $u_iv_j\in E(K_{n,n})$ are different under the labeling $f$ which implies that the labeling $f$ is EAT.\\
\\
We have to check also that the vertex-weight are different. For the vertex $u_i$, 
we have
$$wt_f(u_i)=f(u_i)+\sum^n_{j=1}f(u_iv_j), \ \ i=1,2,...,n$$
$$=2i-1+\sum^n_{j=1(}n(2+j)-(i-1))$$
$$=i(2-n)+\frac{n}{2}(n^2+5n+2)-1.$$
Thus $wt_f(u_{i+1})=wt_f(u_i)-n+2$ which implies that $wt_f(u_i)>wt_f(u_{i+1})$, for $i=1,2,...,n-1$ (since $n>3$). Therefore the weights of vertices $u_i$ are all distinct.\\
\\
For the vertex $v_j$  
 we get
$$wt_f(v_j)= f(v_j)+\sum^n_{i=1}f(u_iv_j), \ \ \ j=1,2,...,n$$
$$=2j+\sum^n_{i=1}(n(2+j)-(i-1))$$
$$=j(2+n^2)+\frac{n}{2}(1+3n).$$
\\
Thus $$wt_f(v_j)=j(2+n^2)+\frac{n}{2}(1+3n)$$
$$=wt_f(v_{j+1})-n^2-2.$$
Therefore $wt_f(v_j)<wt_f(v_{j+1})$, for $j=1,2,...,n-1$ and the weights of vertices $v_j,\ \ j=1,2,...,n$ are all distinct.\\
\\
Next, we have to show that $wt_f(u_i)\neq wt_f(v_j)$.
Assume that $$wt_f(u_i)= wt_f(v_j)$$
$$\Rightarrow\ \ i(2-n)+\frac{n}{2}(n^2+5n+2)-1=j(2+n^2)+\frac{n}{2}(1+3n)$$
$$(2.1) \ \ \ \ \ \ \Rightarrow \ \ 2i(n-2)+2j(2+n^2)=n(n^2+2n+1)-2.$$
Case $1$: Use induction on $j$.
Let $j=1$ in Equation (2.1), we have
$$i=\frac{n^3+n-6}{2(n-2)}\notin \mathbb{Z^+}.$$
Since $(2(n-2))\nmid (n^3+n-6)$, this implies that $i\notin \mathbb{Z^+}$. Therefore $wt_f(u_i)\neq wt_f(v_j)$ for  $j=1$.

Assume that $wt_f(u_i)\neq wt_f(v_j)$ for $j=k$, that is
$wt_f(u_i)= wt_f(v_j)$ implies $i\notin \mathbb{Z^+}$.
 From  Equation (2.1), we have $$(2.2)\ \ \ \ \ i=\frac{n^3+2n^2+n-2-2k(n^2+2)}{2(n-2)}.$$
 Also, $(2(n-2))\nmid (n^3+2n^2+n-2-2k(n^2+2))$, which implies that $i\notin \mathbb{Z^+}$.
 Let $j=k+1$ in Equation (2.2), then we have $$i=\frac{n^3+2n^2+n-2-2(k+1)(n^2+2)}{2(n-2)}$$
  $$=\frac{n^3+n-6-2k(n^2+2)}{2(n-2)}.$$  Also, $(2(n-2))\nmid (n^3+n-6-2k(n^2+2))$, this implies that $i\notin \mathbb{Z^+}$. Thus $wt_f(u_i)\neq wt_f(v_j)$ for $j=k+1$. Therefore, $wt_f(u_i)\neq wt_f(v_j)$ for all values of $j$.

Case 2:  Use induction on $i$.
Let $i=1$ in Equation (2.1), then we have $$j=\frac{n^3+2n^2-n+2}{2(n^2+2)}.$$
Also, $(2(n^2+2))\nmid (n^3+2n^2-n+2)$ which implies that $wt_f(u_i)\neq wt_f(v_j)$ for $i=1$.

Assume that $wt_f(u_i)\neq wt_f(v_j)$ for $i=k$, that is
$wt_f(u_i)= wt_f(v_j)$ implies $j\notin \mathbb{Z^+}$.
 From  Equation (2.1), we have $$(2.3)\ \ \ \ \ j=\frac{n^3+2n^2+n-2-2k(n-2)}{2(n^2+2)}\notin \mathbb{Z^+}.$$
 Also, $(2(n^2+2))\nmid (n^3+2n^2+n-2-2k(n^2+2))$, which implies that $j\notin \mathbb{Z^+}$.

 Let $i=k+1$ in Equation (2.3), then we have $$j=\frac{n^3+2n^2+n-2-2(k+1)(n-2)}{2(n^2+2)}$$
  $$=\frac{n^3-n+2-2k(n-2)}{2(n^2+2)}.$$  Also, $(2(n^2+2))\nmid (n^3-n+2-2k(n-2))$, this implies that $j\notin \mathbb{Z^+}$. Thus $wt_f(u_i)\neq wt_f(v_j)$ for $j=k+1$. Therefore, $wt_f(u_i)\neq wt_f(v_j)$ for all values of $j$.


In conclusion, this shows that $wt_f(u_i)$ and $wt_f(v_j)$ are distinct.
As $wt_f(u_i)$ and $wt_f(v_j)$  are all different, this means that the labeling $f$ is VAT.\\
Also, since $wt_f(v_j)<wt_f(v_{j+1})$ where $f(v_j)<f(v_{j+1})$ and $wt_f(u_i)\nless wt_f(u_{i+1})$, for $f(u_i)<f(u_i+1)$, where $i,j=i,2,...,n-1$, this means that the graph $K_{n,n}$ is a weak ordered graph. \\
\\
Hence, since the labeling  $f$ is simultaneously EAT and VAT, the graph $K_{n,n}$ is a weak ordered super TAT graph.

\end{proof}

Next, we show that complete bipartite graph with different number of vertices in each partite set is a TAT graph.

\begin{thm}
Let $n,m$ be positive integers such that $n,m\geq 3$ and $n\neq m$, then the graph $K_{n,m}$ is a weak ordered super TAT  graph.
\begin{proof}
Partition the graph $K_{n,m}$ into two sets $X_n$ and $Y_m$. Denote the vertices of $X_n$ and $Y_m$ by the symbols $u_i$ and $v_j$ respectively, where $i=1,2,...,n$ and $j=1,2,...,m$. The number of vertices $p$ in $K_{n,m}$ is $n+m$ and the number of edges $q$ is $nm$. The edge set of $K_{n,m}$ is $u_iv_j$, where $i=1,2,...,n$ and $j=1,2,...,m$. We split the proof into the following three cases.\\
\\
Case $1$: $n$ even and $m$ a positive integer.\\
Consider the labeling $f:V(K_{n,m})\cup E(K_{n,m})\rightarrow \{1,2,...,p+q\}$ defined as
 $$ f(u_i)=i,\ \ \ i=1,2,...,n,$$
 $$f(v_j)=n+j,\ \ \ j=1,2,...,m,$$
 $$f(u_iv_j)=nj+m+i,\ \ \ j=1,2,...,m, i=1,2,...,n.$$
 The labeling $f$ is a bijection from the vertex set to the set $\{1,2,...,n+m\}$ which shows that $K_{n,m}$ is super. Also, labeling $f$ is a bijection from the union of the vertex set and the edge set of $K_{n,m}$ to the set $\{1,2,...,n(1+m)+m\}$.\\
 \\
 The edge-weight of the edge $u_iv_j,\ \ i=1,2,...,n, \ \ \ j=1,2,...,m$ is
 $$ wt_f(u_iv_j)=f(u_i)+f(u_iv_j)+f(v_j)$$
 $$=i+nj+m+i+n+j$$
 $$=2i+j(1+n)+n+m$$
 The weights of all the edges are different under the labeling $f$ which means that the labeling $f$ is EAT.\\
 \\
 Moreover, for the vertex $u_i,i=1,2,...,n$, we obtain the vertex weights as follows:
 $$wt_f(u_i)=f(u_i)+\sum_{j=1}^mf(u_iv_j)$$
 $$=i+\sum_{j=1}^m(nj+m+i).$$
 $$(2.4)\ \ \ \ \ \ \ \ =i(1+m)+\frac{m}{2}(m(n+2)+n)$$
 For the vertex $v_j, j=1,2,...,m$, we obtain the vertex-weight
 $$wt_f(v_j)=f(v_j)+\sum_{i=1}^nf(u_iv_j),$$
 $$=n+j+\sum_{i=1}^n(nj+m+i)$$
 $$(2.5)\ \ \ \ \ \ =j(1+n^2)+\frac{n}{2}(2m+n+3).$$
 Next, we show that the vertex weights $wt_f(u_i)$ and $wt_f(v_j)$ are distinct.

 Assume that $wt_f(u_i)=wt_f(v_j)$, then we have
 $$i(1+m)+\frac{m}{2}(m(n+2)+n)=j(1+n^2)+\frac{n}{2}(2m+n+3)$$
 $$(2.6) \ \ \ \ \ \ \Rightarrow 2i(1+m)-2j(1+n^2)=nm-m^2(n+2)+n(n+3)$$
 Case 1: Use induction on $j$ in Equation (2.6). Let $j=1$ in Equation (2.6), to give
 $$i=\frac{-m^2(n+2)+nm+3n(1+n)+2}{2(1+m)}.$$
 Since $(2(1+m))\nmid (-m^2(n+2)+nm+3n(1+n)+2)$, this implies that $i\notin \mathbb {Z^+}$. Thus $wt_f(u_i)\neq wt_f(v_j)$ for $j=1$.

 Assume that $wt_f(u_i)\neq wt_f(v_j)$ for $j=k$, that is
$wt_f(u_i)= wt_f(v_j)$ implies $i\notin \mathbb{Z^+}$.
 From  Equation (2.6), we have $$(2.7)\ \ \ \ \ i=\frac{-m^2(n+2)+nm+n(3+n)+2k(1+n^2)}{2(1+m)}\notin \mathbb{Z^+}.$$
 Also, $(2(1+m))\nmid (-m^2(n+2)+nm+n(3+n)+2k(1+n^2))$, which implies that $i\notin \mathbb{Z^+}$.

Let $j=k+1$ in Equation (2.7), then we have
 $$i=\frac{-m^2(n+2)+nm+3n(1+n)+2k(1+n^2)+2}{2(1+m)}\notin \mathbb {Z^+}.$$
 Therefore, $wt_f(u_i)\neq wt_f(v_j)$ for $j=k+1$. Thus $wt_f(u_i)\neq wt_f(v_j)$ for all values of $j$.

 Case 2: Use induction on $i$ in Equation (2.6). Let $i=1$ in Equation (2.6), to give
 $$j=\frac{-n^2-n(m-m^2+3)+2m(1+m)+2}{2(1+n^2)} \notin \mathbb {Z^+}.$$
 Therefore, $wt_f(u_i)\neq wt_f(v_j)$ for $i=1$.

 Assume that $wt_f(u_i)\neq wt_f(v_j)$ for $i=k$, that is
$wt_f(u_i)= wt_f(v_j)$ implies $j\notin \mathbb{Z^+}$.
 From  Equation (2.6), we have $$(2.8)\ \ \ \ \ j=\frac{-n^2-n(m-m^2+3)+2m^2+2k(1+m)}{2(1+n^2)}\notin \mathbb{Z^+}.$$
 Also, $(2(1+n^2))\nmid (-n^2-n(m-m^2+3)+2m^2+2k(1+m))$, which implies that $j\notin \mathbb{Z^+}$.
Let $i=k+1$ in Equation (2.8), then we have
 $$j=\frac{-n^2-n(m-m^2+3)+2((1+m)(m+k)+1)}{2(1+n^2)} \notin \mathbb {Z^+}.$$
 This implies that $wt_f(u_i)\neq wt_f(v_j)$ for $i=k+1$. Thus $wt_f(u_i)\neq wt_f(v_j)$ for all values of $i$.
 Therefore the weights of the vertices are distinct under the labeling $f$ which implies that the labeling $f$ is VAT.\\
 \\
 Also, from Equation (2.4), $wt_f(u_i)<wt_f(u_{i+1})$ whenever $f(u_i)<f(u_{i+1})$ and from Equation (2.5), $wt_f(v_j)<wt_f(v_{j+1})$ whenever $f(v_j)<f(v_{j+1})$. This implies that $K_{n,m}$ is a weak ordered graph.\\
 \\
 Finally, since the graph $K_{n,m}$ admits the labeling $f$ that is simultaneously EAT and VAT, then $K_{n,m}$ is a TAT graph.\\
\\
 Case $2$: $n$ odd and $m$ even.\\
 By interchanging the partition set $X_n$ with $Y_m$ and $Y_m$ with $X_n$ in case $1$ above gives the result.\\
\\
 Case $3$: $n,m$ odd. Without loss of generality, $m>n$.\\
 Consider the labeling $f:K_{n,m}\rightarrow \{1,2,...,p+q\}$ defined as follows:
 $$f(u_i)=n+m-2i+1,\ \ \ i=1,2,...,n,\ \ \ $$
 $$f(v_j)=\left\{\begin{array}{c}{\hspace{-2cm}}j,\ \ \ \ \ \  j=1,2,...,m-n\\ n+2j-m,\ \ \ m-n+j=1,2,...,m \end{array}\right.$$
$$f(u_iv_j)=m+nj+i,\ \ \ j=1,2,...,m,\ \ \ i=1,2,...,n.$$
It is easy to see that the labeling $f$ above is super.\\
\\
The edge-weight of the edge $u_iv_j, \ \ i=1,2,...,n,\ \ j=1,2,...,m$ is given as:
$$wt_f(u_iv_j)=f(u_i)+f(u_iv_j)+f(v_j)$$
$$=\left\{\begin{array}{c}{\hspace{-0.5cm}}n(1+j)+2m-1+j+1,\ \ \ i=1,2,...,n,\ \ \ j=1,2,...,m-n,\\n(2+j)+m-
i+2j+1,\ \ \ i=1,2,...,n,\ \ \ m-n+j=1,2,...,m\end{array}\right.$$
The weights of all the edges in $K_{n,m}$ are different under the labeling $f$ which means that the labeling $f$ is EAT.\\
\\
We have to check also that the vertex-weights are different. \\
For vertex $u_i,\ \ \ i=1,2,...,n$, we get:
$$wt_f(u_i)=f(u_i)+\sum^m_{j=1}f(u_iv_j)$$
$$=n+m-2i+1+\sum_{j=1}^m(m+nj+i)$$
$$(2.9)\ \ \ \ \ \ \ \ =i(m-2)+\frac{m}{2}((1+m)(n+2))+n+1$$
Also, for vertex $v_j, \ \ \ j=1,2,...,m$, we get
$$wt_f(v_j)=f(v_j)+\sum^n_{i=1}f(u_iv_j)$$
$$(2.10)\ \ \ \ \ \ \ \ =\left\{\begin{array}{c}{\hspace{-0.5cm}}j(1+n^2)+\frac{n}{2}(1+n+2m),\ \ \ j=1,2,...,m-n,\\j(2+n^2)+\frac{n}{2}(3+n+2m)-m,\ \ \ m-n+j=1,2,...,m\end{array}\right.$$
From Equation (2.9), we have that $wt_f(u_i)<wt_f(u_{i+1}),\ \ \ i=1,2,...,n-1$, whenever $f(u_i)<f(u_{i+1})$. Also, from Equation (2.10), we have that $wt_f(v_j)<wt_f(v_{j+1}), \ \ \ j=1,2,...,m-1$, whenever $f(v_j)<f(v_{j+1})$. Therefore, $K_{n,m}$ is a weak ordered graph.
Hence, $K_{n,m}$ is a TAT graph since the labeling $f$ is both EAT and VAT.
\end{proof}
\end{thm}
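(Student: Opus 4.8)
The plan is to mimic the strategy used for $K_{n,n}$ in Theorem \ref{TM2}, but now the asymmetry between the two partite sets forces a case analysis according to the parities of $n$ and $m$. First I would partition $V(K_{n,m})$ into $X_n=\{u_1,\dots ,u_n\}$ and $Y_m=\{v_1,\dots ,v_m\}$ and exhibit an explicit total labeling $f$ that is a bijection from $V(K_{n,m})\cup E(K_{n,m})$ onto $\{1,2,\dots ,n+m+nm\}$, arranging that the $p=n+m$ vertices receive exactly $\{1,\dots ,n+m\}$ so that $f$ is automatically super. The labeling should place consecutive integers on the $u_i$ and $v_j$, and label the edge $u_iv_j$ by an expression of the form $(\text{constant})+nj+i$; when $n$ is even the edge-weight then comes out as $2i+j(1+n)+n+m$, and the odd coefficient $1+n$ guarantees there are no collisions across distinct values of $j$, while for fixed $j$ the term $2i$ separates the edges. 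This establishes that $f$ is EAT.

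Second, I would compute the two families of vertex-weights by summing the incident edge labels: $wt_f(u_i)$ is linear in $i$ with positive slope $1+m$, and $wt_f(v_j)$ is linear in $j$ with positive slope $1+n^2$. Monotonicity in each family is then immediate, which simultaneously shows the weights inside $X_n$ are pairwise distinct, the weights inside $Y_m$ are pairwise distinct, and — since the weights increase together with the vertex labels within each block — that $K_{n,m}$ is weak ordered with respect to the partition $\{X_n,Y_m\}$.

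The main obstacle, exactly as in the $K_{n,n}$ case, is ruling out $wt_f(u_i)=wt_f(v_j)$ across the two parts. Setting the two linear expressions equal yields a single Diophantine relation $2i(1+m)-2j(1+n^2)=nm-m^2(n+2)+n(n+3)$, and I would show no pair $(i,j)$ with $1\le i\le n$, $1\le j\le m$ satisfies it by a divisibility argument: fix $j$ (respectively $i$), solve for $i$ (respectively $j$), and check that $2(1+m)$ (respectively $2(1+n^2)$) does not divide the resulting numerator; then push this through all admissible residues by an induction step that shifts $j\mapsto j+1$ (respectively $i\mapsto i+1$), which alters the numerator only by the fixed amount $2(1+n^2)$ (respectively $2(1+m)$) and hence cannot restore divisibility. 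Combining distinctness of all edge-weights (EAT) with distinctness of all vertex-weights (VAT) then shows $f$ is a totally antimagic total labeling, and since it is super and weak ordered we are done in this case.

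Finally, for the remaining parities I would reduce rather than recompute from scratch: if $n$ is odd and $m$ is even, swapping the roles of $X_n$ and $Y_m$ returns us to the case "first coordinate even" already treated; if both $n$ and $m$ are odd, assume without loss of generality $m>n$ and use a modified labeling that splits the $v_j$ into two blocks, $j\le m-n$ and $m-n<j\le m$, each carrying consecutive integers but with a built-in gap, recompute the now piecewise edge- and vertex-weight formulas, and verify the same monotonicity and cross-part distinctness statements on each branch. The delicate point in this last subcase is that the piecewise definition must not create a coincidence at the boundary $j=m-n$; I would handle this by comparing the two branches of the vertex-weight formula directly and checking the edge-weight formulas remain separated across the seam.
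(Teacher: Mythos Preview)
Your proposal is correct and follows essentially the same route as the paper: the same parity case split, the same super labeling $f(u_i)=i$, $f(v_j)=n+j$, $f(u_iv_j)=nj+m+i$ in the ``$n$ even'' case with edge-weight $2i+j(1+n)+n+m$, the same linear vertex-weights with slopes $1+m$ and $1+n^2$, the same Diophantine relation $2i(1+m)-2j(1+n^2)=nm-m^2(n+2)+n(n+3)$ handled by the same divisibility-plus-shift induction, the swap for $n$ odd and $m$ even, and the same piecewise labeling of the $v_j$ split at $j=m-n$ when both are odd. Your extra remark about checking the seam at $j=m-n$ is a point the paper passes over without comment.
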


\section{TAT labeling of join of graphs}
Let $G\cup H$ denote the disjoint union of graphs $G$ and $H$. The join $G\oplus H$ of the disjoint graphs $G$ and $H$ is the graph $G\cup H$ together with all the edges joining vertices of $V(G)$ and vertices of $V(H)$. In this section, we deal with a totally antimagic total labeling of $K_{n,n}\oplus K_1$ and $K_{n,m}\oplus K_1$. According to Miller {\it et al} \cite{ME1}, we have that all graphs are (super) EAT. If there exist a super EAT labeling of a graph $G$ satisfying the additional condition that it is a weak ordered, we are able to prove that the join $K_{n,n}\oplus K_1$ and $K_{n,m}\oplus K_1$ are TAT. \\

\begin{thm}
Let $K_{n,n}, n>3$ be a weak ordered super EAT graph. Then $K_{n,n}\oplus K_1$ is a TAT graph.
\end{thm}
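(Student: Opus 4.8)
The plan is to start from the weak ordered super EAT labeling $f$ of $K_{n,n}$ constructed in the proof of Theorem~\ref{TM2} (so that $f(u_i)=2i-1$, $f(v_j)=2j$, $f(u_iv_j)=n(2+j)-(i-1)$, and $f$ uses exactly the labels $1,2,\dots,n^2+2n$) and to extend it to $K_{n,n}\oplus K_1$ by handing the new vertex and the $2n$ new edges the $2n+1$ largest labels. Write $w$ for the vertex of $K_1$; then $K_{n,n}\oplus K_1$ has $2n+1$ vertices and $n^2+2n$ edges, the latter being the $n^2$ edges $u_iv_j$ together with the $2n$ edges $wu_i$ and $wv_j$. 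I would define a total labeling $g$ that agrees with $f$ on every element of $K_{n,n}$, sets $g(w)=n^2+2n+1$, and distributes the labels $n^2+2n+2,\dots,n^2+4n+1$ among the edges $wu_i,wv_j$ (the exact assignment to be pinned down below). Since $f$ is a bijection onto $\{1,\dots,n^2+2n\}$, any such $g$ is a bijection from $V\cup E$ onto $\{1,\dots,n^2+4n+1\}$, the full set of $|V|+|E|$ consecutive integers starting at $1$, so $g$ is a total labeling (indeed super).

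For the edge-antimagic property, note first that on an original edge $wt_g(u_iv_j)=wt_f(u_iv_j)=n(2+j)+2j+i$, so these $n^2$ weights are pairwise distinct by Theorem~\ref{TM2} and each is at most $n^2+5n$. Every new edge-weight is the sum of $g(w)$, a new edge label, and a vertex label, hence exceeds $2(n^2+2n)>n^2+5n$; thus no new weight can equal an old one, and it suffices to arrange the top $2n$ labels so that the $2n$ new weights are distinct among themselves. Writing $g(wu_i)=M+p_i$ and $g(wv_j)=M+q_j$ with $M=n^2+2n+1$ and $\{p_i\}\cup\{q_j\}=\{1,\dots,2n\}$, one gets $wt_g(wu_i)=2M-1+p_i+2i$ and $wt_g(wv_j)=2M+q_j+2j$, so the requirement becomes that the $2n$ integers $p_i+2i$ and $q_j+2j+1$ be all distinct. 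Taking the $p_i$ to run, in increasing order of $i$, through one block of $n$ consecutive integers and the $q_j$ through the complementary block turns the two families of weights into two arithmetic progressions of common difference $3$; a short look at residues modulo $3$ — choosing which block goes to the $u$-side and which to the $v$-side according to $n\bmod 3$ — sends them into different residue classes, so all $2n$ new weights are distinct and $g$ is EAT.

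For the vertex-antimagic property there are three families of weights. The weight $wt_g(w)=g(w)+\sum_i g(wu_i)+\sum_j g(wv_j)$ is the sum of the $2n+1$ largest labels, hence strictly exceeds every other vertex-weight, so it never clashes. For a $u$-vertex, $wt_g(u_i)=wt_f(u_i)+g(wu_i)$; substituting the value of $wt_f(u_i)$ from the proof of Theorem~\ref{TM2} together with the fact that $g(wu_i)$ runs through $n$ consecutive integers in increasing order of $i$ gives $wt_g(u_i)=(3-n)i+c$ for a constant $c$, a linear function of $i$ with nonzero slope $3-n$, so the weights $wt_g(u_i)$ are pairwise distinct — and this is exactly where the hypothesis $n>3$ is used. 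For a $v$-vertex, $wt_g(v_j)=wt_f(v_j)+g(wv_j)=(n^2+2)j+g(wv_j)+c'$, and since the dominant term increases by $n^2+2$ at each step while the $g(wv_j)$ vary over a range of width less than $n^2+2$, the weights $wt_g(v_j)$ are pairwise distinct as well.

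It remains to rule out $wt_g(u_i)=wt_g(v_j)$, and this is the heart of the matter. Substituting the formulas above turns this equality into a single linear Diophantine equation in $i$ and $j$ whose coefficients are polynomials in $n$, entirely analogous to equation~(2.1) in the proof of Theorem~\ref{TM2}. I would dispatch it by the same device: fixing $j$ and using induction on $j$ (respectively fixing $i$ and using induction on $i$) one shows that any solution forces $i\notin\mathbb{Z^+}$ (respectively $j\notin\mathbb{Z^+}$), because the relevant denominator fails to divide the relevant numerator; hence there is no solution with $1\le i,j\le n$, and $g$ is VAT. The main obstacle is that this non-divisibility must hold for the particular labels $g(wu_i),g(wv_j)$ fixed in the EAT step, so the distribution of the top $2n$ labels has to be chosen with an eye on both requirements simultaneously: keeping the $2n$ new edge-weights distinct and keeping this Diophantine equation solution-free. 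Producing one assignment that serves both purposes — which may need a short case split on $n\bmod 3$ (and perhaps on the parity of $n$) — is the delicate part of the argument. Once it is carried out, $g$ is simultaneously EAT and VAT, so $K_{n,n}\oplus K_1$ is a TAT graph.
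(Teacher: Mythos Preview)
Your overall framework --- keep $f$ on $K_{n,n}$, give the extra vertex the label $n^{2}+2n+1$, and hand the $2n$ new edges the labels $n^{2}+2n+2,\dots,n^{2}+4n+1$ --- is exactly the paper's. The divergence is in how those $2n$ top labels are distributed. The paper does \emph{not} use two blocks; it interleaves, setting
\[
g(u_iu)=n^{2}+2n+2i,\qquad g(v_ju)=n^{2}+2n+2j+1.
\]
With this choice the new edge weights are $wt_g(uu_i)=2n^{2}+4n+4i$ and $wt_g(uv_j)=2n^{2}+4n+4j+2$, automatically distinct because one family is $\equiv 0\pmod 4$ and the other $\equiv 2\pmod 4$; no case split on $n\bmod 3$ is needed. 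More to the point, interleaving gives the uniform identity $wt_g(x)=wt_f(x)+f(x)+(n^{2}+2n+1)$ for every $x\in V(K_{n,n})$, so the cross-comparison $wt_g(u_i)\ne wt_g(v_j)$ reduces directly to information about the map $x\mapsto wt_f(x)+f(x)$ already implicit in Theorem~\ref{TM2}, rather than to a new Diophantine equation depending on which block went to which side.

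That cross-comparison is exactly where your proposal has a genuine gap. You acknowledge that ``producing one assignment that serves both purposes \dots\ is the delicate part of the argument'' and then stop, with the assignment of the top $2n$ labels still not fixed and the promised non-divisibility not verified. The inductive template of equation~(2.1) does not transfer automatically: under your block scheme the equation $wt_g(u_i)=wt_g(v_j)$ has different coefficients (and a different constant term, shifted by which block was chosen and by the $n\bmod 3$ case), so the claim that the relevant denominator fails to divide the relevant numerator is a new fact that needs its own proof for each case you introduce. Until a single explicit assignment is written down and that check is carried out, the VAT half of the argument is incomplete. The paper avoids this difficulty precisely by the interleaved labeling above; adopting it would let you drop the $n\bmod 3$ split and close the gap in one stroke.
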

\begin{proof}
Let $f$ be a weak ordered super EAT labeling of $K_{n,n}$. As $f$ is super, we can denote the vertices of $K_{n,n}$by the symbols $\{u_1,u_2,...,u_n\}$ and $\{v_1,v_2,...,v_n\}$ such that
$$f(u_i)=2i-1,\ \ \ \ \ i=1,2,...,n$$
$$f(v_j)=2j,\ \ \ \ \ j=1,2,...,n$$
Since $f$ is a weak ordered, then for $j=1,2,...,n-1$, we have that $wt_f(v_j)\leq wt_f(v_{j+1})$ which follows from Theorem \ref{TM2}.\\
\\
By the symbol $u$, we denote the vertex of $K_{n,n}\oplus K_1$ not belonging to $K_{n,n}$.\\
\\
We define a new labeling $g$ of $K_{n,n}\oplus K_1$ such that
$$g(x)=f(x),\ \ \ \ \ \ x\in V(K_{n,n})\cup E(K_{n,n})$$
$$g(u)=n(n+2)+1$$
$$g(u_iu)=n^2+2(n+i),\ \ \ \ \ i=1,2,...,n$$
$$g(v_ju)=n^2+2(n+j)+1,\ \ \ \ j=1,2,...,n$$
It is easy to see that $g$ is a bijection from $V(K_{n,n}\oplus K_1)\cup E(K_{n,n}\oplus K_1)$ to the set $\{1,2,...,n(n+4)+1\}$.\\
\\ For the vertex-weights under labeling $g$, we have the following:
$$wt_g(u)=g(u)+\sum^n_{i=1}g(u_iu)+\sum^n_{j=1}g(v_ju)$$
$$=n(n+2)+1+\sum^n_{i=1}(n^2+2(n+i))+\sum^n_{j=1}(n^2+2(n+j)+1)$$
$$=(1+2n)(n^2+3n+1)$$
\\
For vertex $u_i, \ \ \ \i=1,2,...,n$, we have:
$$wt_g(u_i)=g(u_i)+\sum_{v_j\in N(u_i)}g(u_iv_j)+g(u_iu)$$
$$=wt_f(u_i)+2n+n^2+i+1$$
$$\geq wt_f(u_{i+1})+2n+n^2+i+1$$
$$>wt_f(u_{i+1})+2n+n^2+i+2$$
$$=wt_g(u_{i+1})$$
\\
Moreover, for $j=1,2,...,n$, we get
$$wt_g(v_j)=g(v_j)+\sum_{u_i\in N(v_j)}g(v_ju_i)+g(v_ju)$$
\\
We have $wt_f(v_j)$ from Theorem \ref{TM2}, hence we have
$$wt_g(v_j)=wt_f(v_j)+n^2+3n+j+1$$
$$\leq wt_f(v_{j+1})+n^2+3n+j+1$$
$$<wt_f(v_{j+1})+n^2+3n+j+2$$
$$=wt_g(v_{j+1})$$
\\
From Theorem (2.3), $wt_g(u_i)\neq wt_g(v_j)$. Also, $wt_g(u_i)>wt_g(u)$ and $wt_g(v_j)>wt_g(u)$.
Thus the vertex-weights are all different which implies that the labeling $g$ is VAT.\\
\\
The edge-weights of the edges in $E(K_{n,n})$ under the labeling $g$ are all different as $f$ is an EAT labeling of $K_{n,n}$. More precisely, we have
$$wt_g(e)=wt_f(e)$$ for every $e\in E(K_{n,n})$\\
\\
Also, as $f$ is super, for the upper bound on the maximum edge-weight of $e\in K_{n,n}$ under the labeling $g$, we have $$wt_g^{ max}(e)=wt_f^{max}(e)\leq n^2+6n-1 $$
\\
For $i=1,2,...,n$, we get
$$wt_g(uu_i)=g(u)+g(uu_i)+g(u_i)$$
\\
From Theorem \ref{TM2}, we have $f(u_i)=2i-1$. Since $f(x)=g(x)$ for $x\in V(K_{n,n})\cup E(K_{n,n})$, we have that $$g(u_i)=2i-1$$.
$$wt_g(uu_i)=(n^2+2n+1)+(n(n+2)+1)+2i-1$$
$$=2n^2+4(n+i)>n(n+6)-1\geq wt_g^{max}(e)$$
Where $e\in E(K_{n,n})$.\\
\\
Moreover, for $j=1,2,...,n$, we get
$$wt_g(uv_j)=g(u)+g(uv_j)+g(v_j)$$
From Theorem \ref{TM2}, we have $f(v_j)=2j$. Since $f(x)=g(x)$ for $x\in V(K_{n,n})\cup E(K_{n,n})$, we have that $g(v_j)=2j$.
$$wt_g(uv_j)=(n^2+2n+1)+(n^2+2(n+j)+1)+2j$$
$$=2(n^2+1)+4(n+j)>n(n+6)-1\neq wt_g(uu_i) \geq wt_g^{max}(e)$$
Where $e\in E(K_{n,n})$.\\
\\
Now to show that the edge-weights are all different.
\\
Assume that $wt_g(u_iu)=wt_g(v_ju)$, then we have
$$2n^2+4(n+i)=2(n^2+1)+4(n+).$$
 This gives $i-j=\frac{1}{2}\notin \mathbb{Z^+}$. Therefore, $wt_g(u_iu)\neq wt_g(v_ju)$ which implies that the labeling $g$ is an EAT labeling.
Thus $g$ is a TAT labeling of $K_{n,n}\oplus K_1$.

\end{proof}
\begin{cor}
Let $K_{n,m}$ be a weak ordered super EAT graph. Then $K_{n,m}\oplus K_1$ is a TAT graph.
\end{cor}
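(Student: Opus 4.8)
The plan is to imitate the proof of the preceding theorem on $K_{n,n}\oplus K_1$, the only change being that the role of the explicit labeling of $K_{n,n}$ is now played by the labeling $f$ of $K_{n,m}$ constructed in the theorem on $K_{n,m}$ in Section~2 --- which is simultaneously super, weak ordered, and (being VAT there) separates the two partite classes. Write $p=n+m$ and $q=nm$, denote the partite sets of $K_{n,m}$ by $X_n=\{u_1,\dots,u_n\}$ and $Y_m=\{v_1,\dots,v_m\}$, indexed as in that theorem, and recall from it that under $f$ all edge-weights of $K_{n,m}$ are pairwise distinct, $wt_f(u_i)$ is strictly increasing in $i$, $wt_f(v_j)$ is strictly increasing in $j$, and $wt_f(u_i)\neq wt_f(v_j)$ for all $i,j$. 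Let $u$ be the vertex of $K_{n,m}\oplus K_1$ outside $K_{n,m}$.

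Next I would extend $f$ to a total labeling $g$ of $K_{n,m}\oplus K_1$: put $g(x)=f(x)$ for $x\in V(K_{n,m})\cup E(K_{n,m})$, set $g(u)=p+q+1$, and assign the remaining labels $p+q+2,\dots,p+q+n+m+1$ to the $n+m$ new edges, for instance $g(u_iu)=p+q+1+i$ and $g(v_ju)=p+q+1+n+j$ (one may permute this assignment freely, and a parity-respecting choice can be taken if it shortens the last verification). Then $g$ is a bijection from $V(K_{n,m}\oplus K_1)\cup E(K_{n,m}\oplus K_1)$ onto $\{1,\dots,n(m+2)+2m+1\}$, hence a total labeling.

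It remains to verify the two antimagic properties of $g$. For VAT: $wt_g(u)$ is one explicitly computable number; $wt_g(u_i)=wt_f(u_i)+g(u_iu)$ and $wt_g(v_j)=wt_f(v_j)+g(v_ju)$, so since both $wt_f$ and the added edge labels increase along the index, $wt_g(u_i)$ is strictly increasing in $i$ and $wt_g(v_j)$ strictly increasing in $j$, whence the vertex-weights are pairwise distinct within each partite class (this also keeps $K_{n,m}\oplus K_1$ weak ordered). The cross inequality $wt_g(u_i)\neq wt_g(v_j)$ follows by substituting the closed forms for $wt_f$ from the Section~2 theorem (the expressions (2.4)--(2.5) in the first parity case, their mirror images in the second, (2.9)--(2.10) in the third) into $wt_f(u_i)+g(u_iu)=wt_f(v_j)+g(v_ju)$ and running the same divisibility-and-induction argument used there; alternatively one picks the new edge labels so that $wt_g(u_i)$ and $wt_g(v_j)$ lie in distinct residue classes. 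Finally $wt_g(u)$ differs from every $wt_g(u_i)$ and $wt_g(v_j)$ by a direct comparison of the closed forms (in fact $wt_g(u)$ is the largest vertex-weight). For EAT: edges of $K_{n,m}$ retain their pairwise-distinct $f$-weights; for the new edges one computes $wt_g(uu_i)$ and $wt_g(uv_j)$ from $g(u)$, the new edge labels, and $g(u_i)=f(u_i)$, $g(v_j)=f(v_j)$, and checks that each family is strictly monotone in its index, that the two families are disjoint, and that every new edge-weight exceeds the largest $f$-weight of an edge of $K_{n,m}$ --- the last point reducing to $2p+2q+4>3p+q-1$, i.e. $q+5>p$, which holds since $q-p=nm-n-m=(n-1)(m-1)-1\geq 3$ for $n,m\geq 3$. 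Assembling these, $g$ is simultaneously VAT and EAT, so $K_{n,m}\oplus K_1$ is a TAT graph.

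The step I expect to be the real obstacle is the cross-partition inequality $wt_g(u_i)\neq wt_g(v_j)$: unlike the within-class comparisons it is not monotone and must be handled by the somewhat delicate number-theoretic case analysis (showing a particular integer linear combination of $i$ and $j$ is never divisible by the relevant modulus), and that analysis has to be run separately in the three parity cases of the definition of $f$, where the formulas for $wt_f(v_j)$ differ. Everything else --- the bijection, the monotone orderings of the vertex- and new-edge-weights, and the edge-weight gap $q+5>p$ --- is routine bookkeeping mirroring the proof of the preceding theorem.
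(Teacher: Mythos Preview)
The paper states this corollary without proof, the intended reading being that the construction for $K_{n,n}\oplus K_1$ in the preceding theorem carries over verbatim once the Section~2 labeling of $K_{n,m}$ replaces that of $K_{n,n}$. Your proposal does exactly this adaptation, so it matches the paper's (implicit) approach; the only cosmetic difference is that in the $K_{n,n}$ case the paper interleaves the labels on the new edges $u_iu$ and $v_ju$ rather than assigning them in two blocks, which slightly shortens the disjointness check for the two families of new edge-weights.
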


\end{document}